\newtheorem{thm}{Theorem}[section]
\newtheorem{cor}[thm]{Corollary}
\newtheorem{lem}[thm]{Lemma}
\newtheorem{prop}[thm]{Proposition}
\newtheorem{thmx}{Theorem}
\theoremstyle{definition}
\newtheorem{defin}[thm]{Definition}
\newtheorem{rem}[thm]{Remark}
\newcommand{\lle}{\langle}
\newcommand{\rre}{\rangle}
\newcommand{\eps}{\varepsilon}
\newcommand{\red}{W_3}
\DeclareMathOperator{\Var}{Var}
\title[Non-orderability of random triangular groups]{Non-orderability of random triangular groups \\by using random 3CNF formulas}
\author{Damian Orlef}
\address{Institute of~Mathematics of the Polish Academy of~Sciences, ul. \'{S}niadeckich 8, 00-656 Warsaw, Poland}
\email{dorlef@impan.pl}
\begin{document}
\maketitle

\begin{abstract}
We show that a~random group $\Gamma$ in the triangular binomial model $\Gamma(n, p)$ is a.a.s.\ not left-orderable for $p\in(cn^{-2}, n^{-3/2-\eps})$, where $c, \eps$ are any constants satisfying $\eps>0$, ${c>(1/8)\log_{4/3}{2}\approx 0.3012}$. We also prove that if $p\geq (1+\eps)(\log n)n^{-2}$ for any fixed $\eps>0$, then a~random $\Gamma\in \Gamma(n,p)$ has a.a.s.\ no non-trivial left-orderable quotients. We proceed by constructing 3CNF formulas, which encode necessary conditions for left-orderability and then proving their unsatisfiability a.a.s.
\end{abstract}

\section{Introduction}

The~triangular binomial model of random groups is defined as follows. 

Fix $p:\mathbb{N}\rightarrow [0,1]$. Given $n\in\mathbb{N}$, let $S=\{s_1,s_2,\ldots,s_n\}$ be a set of~$n$~generators. A~random group in \emph{the~triangular binomial model $\Gamma(n,p)$} is given by the~presentation $\lle S | R\rre$, where $R$ is a~random subset of~the~set of~all cyclically reduced words of~length 3 over $S\cup S^{-1}$, with each word included in~$R$ independently with probability $p(n)$.

Given a~property $\mathcal{P}$ of~groups or presentations, we say that
a~random group in~the~model $\Gamma(n,p)$ satisfies $\mathcal{P}$ \emph{asymptotically almost surely} (\emph{a.a.s.}) if

\begin{equation*}
\lim_{n\rightarrow{\infty}}\mathbb{P}\big(\Gamma\in\Gamma(n,p)\text{ satisfies }\mathcal{P}\big) = 1.
\end{equation*}

This model (a variant of) was introduced by \.{Z}uk in~\cite{zuk03}. Basic properties of~random triangular groups vary with $p$ as described in the~following two theorems. Every bound on $p$ is assumed to hold for almost all $n$.

\begin{thm}[{\cite[Theorem 3]{zuk03}, \cite[Theorem 1]{als14}}]\label{phase_transition1}
	For any fixed $\eps>0$, if $p<n^{-3/2-\eps}$, then a~random group in $\Gamma(n,p)$ is a.a.s.\	infinite, torsion-free, and hyperbolic, while also there exists a~constant $C>0$ such that if $p> Cn^{-3/2}$, then a~random group in $\Gamma(n,p)$ is a.a.s.\ trivial.
\end{thm}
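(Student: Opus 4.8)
This theorem splits into two claims, matching Gromov's phase transition for the density model at density $\tfrac12$, and the plan is to adapt the density-model arguments to the triangular model. Write $p=(2n)^{3(d-1)}$, so that the expected number of relators is $(2n)^{3d+o(1)}$; then $p<n^{-3/2-\eps}$ forces $d$ below $\tfrac12$ by a fixed margin (roughly $d<\tfrac12-\tfrac\eps3$), while $p>Cn^{-3/2}$ forces $d>\tfrac12$. The two regimes are handled by completely different methods.

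\textbf{Subcritical regime ($d<\tfrac12$).} The plan is to prove a uniform linear isoperimetric inequality and read off the conclusions. First I would show that a.a.s.\ there is $\alpha=\alpha(\eps)>0$ such that $|\partial D|\geq\alpha\,|D|$ for every reduced van Kampen diagram $D$ over the random presentation, where $|D|$ counts the $2$-cells and $|\partial D|$ is the boundary length. This is a first-moment estimate: for fixed $k$, the number of combinatorial types of disc diagram with $k$ triangular $2$-cells is at most $K^{k}$ for an absolute constant $K$; such a type has $(3k+|\partial D|)/2$ edges by Euler's formula, hence at most $(2n)^{(3k+|\partial D|)/2}$ labellings of its edges by $S^{\pm1}$; and a fixed labelled diagram carrying $j$ distinct relators is realised by $\Gamma$ with probability $p^{j}$. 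The combinatorial core is a lower bound $j\geq(1+\delta)\tfrac{k}{2(1-d)}$, valid whenever $|\partial D|<\alpha k$, with $\delta=\delta(\eps)>0$: in a \emph{reduced} diagram repeated relators are geometrically constrained, forcing enough of them to be distinct (this is the triangular incarnation of Gromov's inequality). Combining the three bounds, $\mathbb{E}\big[\#\{\text{bad diagrams with }k\text{ faces}\}\big]\leq\sum_{k}c^{k}$ with $c<1$ precisely because $d<\tfrac12$, so a.a.s.\ no bad diagram exists. A linear isoperimetric inequality on van Kampen diagrams is equivalent to Gromov hyperbolicity. Running the same first-moment estimate over reduced spherical diagrams shows the presentation complex is a.a.s.\ aspherical, hence of cohomological dimension $\leq2$, hence $\Gamma$ is torsion-free. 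Finally, the estimate restricted to the finitely many diagrams of bounded size shows that a.a.s.\ no nonempty reduced word of length $\leq L$ is trivial in $\Gamma$ for each fixed $L$; in particular $s_1\neq1$, so $\Gamma$ is nontrivial, and a nontrivial torsion-free group is infinite (the same input in fact makes $\Gamma$ non-elementary).

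\textbf{Supercritical regime ($d>\tfrac12$).} By monotonicity in $p$ (adding relators passes to a quotient, and a quotient of the trivial group is trivial) it suffices to treat $p$ close to the threshold. The plan is to show the relations collapse $\Gamma$. Once $d>\tfrac12$ there are more than $(2n)^{3/2}$ relators, so, by a second-moment count, a.a.s.\ many pairs of relators overlap in more than half their length, and each such overlap cancels to a relation among the generators shorter than a relator. Iterating this overlap mechanism, and controlling the relevant counts by second-moment estimates, one shows that a.a.s.\ all generators become equal in $\Gamma$ to one element $g$; since a.a.s.\ some relator has the cyclically reduced form $s_is_js_k^{-1}$ with $i,j,k$ distinct, which then reads $g=1$, one concludes $\Gamma=1$. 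The counting must be arranged so that the chain of overlaps already closes at a constant multiple of the $n^{-3/2}$ scale, which is where $C$ large is used.

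\textbf{Main obstacle.} The conceptually hard step is the combinatorial core of the subcritical case: the uniform lower bound on the number of distinct relators in an arbitrary reduced diagram (Gromov's inequality), including the bookkeeping for diagrams with many repeated relators and for keeping diagrams reduced. In the supercritical case the delicate step is making the overlap/iteration argument sharp enough to close near density $\tfrac12$. Everything else (passing from isoperimetry to hyperbolicity, from asphericity to torsion-freeness, from short-relation-freeness to infiniteness, and the moment calculations themselves) is routine once these inputs are in hand.
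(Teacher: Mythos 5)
This is a background theorem that the paper quotes from \cite{zuk03} and \cite{als14} without proof, so there is no internal argument to compare against; what you have written has to be judged against those sources. Your outline does follow the standard route: for the subcritical part, the Ollivier--\.{Z}uk first-moment count over reduced van Kampen (and spherical) diagrams combined with ``Gromov's inequality''; for the supercritical part, the collapse-by-overlaps argument of Antoniuk--\L{}uczak--\'{S}wi\k{a}tkowski. But in both halves the step you yourself label as the conceptually hard one is simply asserted, and in the supercritical half the quantitative claim as stated does not close. At $p=Cn^{-3/2}$ the expected number of pairs of relators sharing a two-letter prefix is $\Theta(n)$, and the resulting ``identification graph'' on the $n$ generators with $\Theta(n)$ random edges is a.a.s.\ \emph{disconnected} (it has isolated vertices up to about $\tfrac12 n\log n$ edges), so ``iterating this overlap mechanism'' between pairs of relators does not by itself make all generators equal. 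The actual argument needs a change of mechanism: for $C$ large the identification graph has a giant component, so a positive proportion $\theta n$ of the generators collapse to powers of a single element $g$; then one uses relators having \emph{two} letters in that collapsed set (there are $\sim 8p\theta^2n^2$ of them per remaining generator, so each remaining generator is absorbed with failure probability $e^{-\Omega(\sqrt{n})}$, and a union bound finishes); finally relators entirely inside the collapsed set force $g=1$. Without the giant-component/absorption phase, your sketch proves triviality only for $p\gtrsim n^{-3/2}\sqrt{\log n}$, which is weaker than the stated $Cn^{-3/2}$.

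In the subcritical half, the deferred inequality $j\geq(1+\delta)\tfrac{k}{2(1-d)}$ for the number of distinct relators in a reduced diagram is not a bookkeeping lemma --- it is the content of the theorem in that regime, and its known proofs (Ollivier's, and its adaptation to relator length $3$ with $n\to\infty$ rather than relator length $\ell\to\infty$ over a fixed alphabet) require the ``abstract diagram'' decomposition and a careful treatment of diagrams in which a single relator appears many times; your estimate ``$K^k$ combinatorial types, $(3k+|\partial D|)/2$ edges, probability $p^j$'' is the easy outer shell around it. So the proposal is a correct high-level plan matching the cited literature, but it is a plan rather than a proof: the two steps that carry the mathematical weight are named but not supplied, and one of the counts you do supply (pairwise overlaps sufficing to identify all generators) is wrong at the claimed scale.
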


For sufficiently small values of~$p$, random triangular groups are actually free.

\begin{thm}[{\cite[Theorems 1, 2, 3]{als15}}]\label{phase_transition2}
	There exist constants $c_1,c_2>0$ such that a~random group in~$\Gamma(n,p)$ is a.a.s.\ free if $p<c_1n^{-2}$, and a.a.s.\ non-free or trivial if $p > c_2n^{-2}$.
\end{thm}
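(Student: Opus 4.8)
The plan is to translate the presentation into the combinatorics of a sparse random $3$-uniform hypergraph and then read off freeness via a collapsibility argument and non-freeness via an Euler-characteristic argument. First I would record that $|R|\sim\mathrm{Bin}(N,p)$ with $N=(2n)^3(1+o(1))$, so a random $\Gamma$ has $m=(1+o(1))\,8pn^3$ relators; throughout the relevant ranges $m=\Theta(n)$, and $m=O(n^{3/2-\eps})$ near the upper end. Apart from an exceptional set of ``degenerate'' relators — proper powers $s_i^{\pm3}$ (expected number $o(1)$) and semidegenerate words $s_i^{\pm2}s_j^{\pm1}$ (expected number $O(1)$) — every relator uses three distinct generators, so the remaining relators define a random $3$-uniform hypergraph $H$ on the vertex set $S$ with edge density a constant multiple of $pn^{2}$.

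For the free regime, so $p<c_1n^{-2}$ and $H$ has $cn$ edges with $c$ a small explicit constant, I would show by a first-moment bound that a.a.s.\ $H$ has empty $2$-core, i.e.\ repeatedly deleting a vertex of degree $1$ together with its incident hyperedge deletes all of $H$. The point distinguishing hypergraphs from graphs is that any sub-hypergraph with $v$ vertices, $e$ edges and minimum degree at least $2$ satisfies $3e\ge 2v$; hence its expected number is at most roughly $\sum_{v,e}\binom{n}{v}\binom{\binom v3}{e}q^{e}$ with $q=\Theta(n^{-2})$, and since the exponent $v-2e$ is at most $-v/3$ this sum is $o(1)$ once $c$ lies below an explicit threshold — and that threshold is what determines $c_1$. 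Each deletion step is a Tietze transformation eliminating one generator (using the unique relator containing it, in which it occurs exactly once) together with one relator, and because that generator occurs in no other relator no substitution elsewhere is needed; when the process terminates no relator survives, so $\Gamma\cong F_j$ is free on the vertices never deleted. The finitely many semidegenerate relators are absorbed by first eliminating the corresponding singly-occurring generator.

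For the non-free regime I would pass to the $2$-core $K$ of $H$: the same collapsing process realizes $\Gamma$ as a free product $G(K)*F_j$, where $G(K)=\lle S_K\mid R_K\rre$ is the subpresentation supported on $K$, and since subgroups of free groups are free, $\Gamma$ is free if and only if $G(K)$ is. When $p>c_2n^{-2}$ with $c_2$ a sufficiently large constant, a standard analysis of the $2$-core of a random $3$-uniform hypergraph with linearly many edges shows that a.a.s.\ $K$ is non-empty with $m_K>n_K$, in fact $m_K-n_K=\Omega(n)$ since almost every vertex then has degree at least $2$. If in addition $p<n^{-3/2-\eps}$, then by the van Kampen diagram estimates underlying Theorem~\ref{phase_transition1} the presentation complex of $\Gamma$, and hence its subcomplex presenting $G(K)$ (any reduced spherical diagram over $R_K$ would be one over $R$), is a.a.s.\ aspherical, so $\chi(G(K))=1-n_K+m_K\ge 2$, whereas $\chi(F_k)=1-k\le 1$ for every $k$; thus $G(K)$, and therefore $\Gamma$, is not free. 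For $p>Cn^{-3/2}$ the group is a.a.s.\ trivial by Theorem~\ref{phase_transition1}, and the narrow intermediate window $n^{-3/2-\eps}\le p\le Cn^{-3/2}$ is covered by combining the two arguments.

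The main obstacle is the asphericity input in the non-free regime: the Euler-characteristic obstruction works only because $\chi$ is a genuine group invariant, which requires the relevant subpresentation complex to be a $K(\pi,1)$, and establishing this a.a.s.\ for all $p$ up to $n^{-3/2-\eps}$ — where relators routinely share subwords of length $2$, so no small-cancellation hypothesis is available — has to be done by directly controlling reduced spherical diagrams, as in the proof of Theorem~\ref{phase_transition1}. A secondary difficulty is pinning down the explicit $2$-core thresholds that separate admissible constants $c_1<c_2$ and excluding, via union bounds, the low-probability configurations (degenerate relators, repeated hyperedges) on which the hypergraph dictionary would otherwise fail.
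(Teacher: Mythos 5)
The paper does not actually prove this statement: Theorem~\ref{phase_transition2} is quoted from \cite{als15}, and the only in-house arguments are (a)~the introduction's sketch of the second half (asphericity plus Euler characteristic) and (b)~the observation that Theorem~\ref{main_thm_non_lo} independently yields that half, since free groups are left-orderable. Your two halves track the available comparisons closely: the freeness half via emptiness of the $2$-core of the relator hypergraph and Tietze collapses is essentially the strategy of \cite{als15}, and your non-free half is the paper's sketch, except that the detour through the $2$-core $K$ is unnecessary --- once the presentation complex is aspherical, $|R|\geq n$ already forces $\chi(\Gamma)=1-n+|R|>0$, which rules out freeness of positive rank; this is how the paper reaches the constant $1/8$, whereas your route needs the harder fact $m_K>n_K$ and a larger $c_2$.

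Two steps need repair. First, in the freeness half the semidegenerate relators $s_i^{\pm2}s_j^{\pm1}$ are not simply ``absorbed'': their expected number is $\Theta(pn^2)=\Theta(1)$, and the probability that the singly-occurring generator $s_j$ of such a relator also appears in another relator is likewise $\Theta(pn^2)=\Theta(1)$, so with probability bounded away from $0$ eliminating $s_j$ forces a substitution into other relators and destroys the triangular structure; you must either analyse the resulting longer relators or show that the cluster containing such a relator is a.a.s.\ collapsible by a separate argument. Second, the claim that ``the narrow intermediate window $n^{-3/2-\eps}\le p\le Cn^{-3/2}$ is covered by combining the two arguments'' is false as stated: for a fixed $\eps$ the asphericity input applies only below $n^{-3/2-\eps}$ and triviality only above $Cn^{-3/2}$, and a function such as $p(n)=n^{-3/2}/\log n$ satisfies neither hypothesis for any fixed $\eps>0$. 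The paper's sketch closes this hole differently: for $p\geq C'(\log n)n^{-2}$ it invokes property~(T), which is incompatible with being a non-trivial free group, and for smaller $p$ one automatically has $p<n^{-3/2-\eps}$, so the two regimes genuinely cover the whole range.
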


In~this article we explore left-orderability of~random triangular groups. A~particular consequence of~Theorem~\ref{phase_transition2} is that if $p<c_1n^{-2}$, then a~random group in~$\Gamma(n,p)$ is a.a.s.\ left-orderable (see \cite[Section 1.2.3]{dnr14}). We show that this statement is optimal up to multiplication of~$p$ by a~constant.

\begin{thmx}\label{main_thm_non_lo}
Let $c>(1/8)\log_{4/3}{2}\approx 0.3012$ be a~constant. If $p>cn^{-2}$, then a~random group in~$\Gamma(n,p)$ is a.a.s.\ trivial or not left-orderable. In particular, for any fixed $\varepsilon>0$, if $p\in(cn^{-2}, n^{-3/2-\eps})$, then a~random group in~$\Gamma(n,p)$ is a.a.s.\ not left-orderable.
\end{thmx}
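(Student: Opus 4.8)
The plan is to encode a necessary condition for left-orderability as a $3$CNF formula $\Phi(R)$ built from the relator set $R$, and to show that $\Phi(R)$ is a.a.s.\ unsatisfiable once $c>\tfrac18\log_{4/3}2$. Recall that a group is left-orderable iff it admits a \emph{positive cone}: a subsemigroup $P$ with $G=P\sqcup\{1\}\sqcup P^{-1}$. Suppose $G=\lle s_1,\dots,s_n\mid R\rre$ is left-orderable with positive cone $P$. Put $\sigma(i)=1$ if $s_i\in P$, $\sigma(i)=0$ if $s_i^{-1}\in P$ (and arbitrarily if $s_i=1$), and for each letter set $\ell(s_i^{+1})=X_i$, $\ell(s_i^{-1})=\neg X_i$; to a relator $w=x_1x_2x_3\in R$ assign the clauses $C_w^{+}=\neg\ell(x_1)\vee\neg\ell(x_2)\vee\neg\ell(x_3)$ and $C_w^{-}=\ell(x_1)\vee\ell(x_2)\vee\ell(x_3)$, and let $\Phi(R)=\bigwedge_{w\in R}\bigl(C_w^{+}\wedge C_w^{-}\bigr)$, a $3$CNF over $X_1,\dots,X_n$. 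If every generator occurring in $w$ is nontrivial in $G$, then $\sigma$ satisfies $C_w^{+}$ (otherwise all three letters of $w$ lie in $P$, so $w\in P$, contradicting triviality of $w$ in $G$) and, symmetrically, $C_w^{-}$; a short case analysis, using that left-orderable groups are torsion-free, in fact shows that if $\sigma$ violates $C_w^{+}$ or $C_w^{-}$ then \emph{every} generator of $w$ is trivial in $G$. Hence, if $G$ is left-orderable and has no trivial generator, then $\Phi(R)$ is satisfiable.

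Next I would compute, for a fixed assignment $\sigma\in\{0,1\}^n$, how many cyclically reduced length-$3$ words are ``monochromatic'' under $\sigma$, i.e.\ have $\ell$ constant on their three letters. The words on which $\ell\equiv 1$ are exactly those of the form $s_a^{\sigma(a)}s_b^{\sigma(b)}s_c^{\sigma(c)}$ with $(a,b,c)\in[n]^3$; all $n^3$ of these are distinct and automatically cyclically reduced (consecutive letters of this shape never cancel), and flipping all signs gives the $n^3$ words with $\ell\equiv 0$. So out of the $N:=8n^3-12n^2+6n$ cyclically reduced length-$3$ words, exactly $2n^3$ are monochromatic under $\sigma$, and $\sigma$ satisfies $\Phi(R)$ iff $R$ contains none of them. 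Since $2n^3/N\ge\tfrac14$, for a uniformly random $m$-element subset $R$ of the word set we get $\Pr(\sigma\models\Phi(R))=\binom{N-2n^3}{m}\big/\binom{N}{m}\le\bigl(1-2n^3/N\bigr)^m\le(3/4)^m$.

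The decisive step is a first-moment estimate \emph{after conditioning on $|R|$}. As $|R|\sim\mathrm{Binom}(N,p)$, a.a.s.\ $|R|=m$ for some $m$ with $m=Np(1+o(1))=8cn(1+o(1))$, and conditioned on $|R|=m$ the set $R$ is uniform among $m$-subsets; hence
\[
\mathbb{E}\bigl[\,\#\{\sigma:\sigma\models\Phi(R)\}\ \big|\ |R|=m\,\bigr]\ \le\ 2^{n}\,(3/4)^{m}\ =\ \exp\!\bigl(\bigl(\ln 2-8c\ln\tfrac43+o(1)\bigr)n\bigr),
\]
which tends to $0$ precisely when $c>\frac{\ln 2}{8\ln(4/3)}=\tfrac18\log_{4/3}2$. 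By Markov's inequality and a union bound over the $O(\sqrt n\,\log n)$ admissible values of $m$, $\Phi(R)$ is a.a.s.\ unsatisfiable, so a.a.s.\ $G$ is not left-orderable or has a trivial generator. To pass from this to the dichotomy in the statement one assumes $G$ nontrivial, eliminates trivial generators via Tietze transformations, and reruns the first moment, now also summing over the choice of the set of trivial generators, of size $\beta n$; its exponent becomes $h(\beta)+(1-\beta)\ln 2+8c\ln\bigl(1-(1-\beta)^3/4\bigr)$ with $h(\beta)=-\beta\ln\beta-(1-\beta)\ln(1-\beta)$, which equals $\ln 2-8c\ln\tfrac43<0$ at $\beta=0$ and hence stays negative on some $[0,\delta]$, so it suffices to know (and one checks separately) that a.a.s.\ at most $\delta n$ generators of $G$ are trivial. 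Finally, the ``In particular'' clause follows because for $p\in(cn^{-2},n^{-3/2-\eps})$ a random group is a.a.s.\ infinite by Theorem~\ref{phase_transition1}, hence nontrivial, hence not left-orderable.

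I expect the conditioning on $|R|$ to be the crucial point: the unconditioned first moment only yields $2^n(1-p)^{2n^3}\sim 2^n e^{-2cn}$ and hence the weaker bound $c>\tfrac12\ln 2\approx 0.347$, whereas conditioning discards the dominant large-deviation contribution from atypically sparse relator sets and gains the sharp $\tfrac18\log_{4/3}2\approx 0.301$. The other place where real care is needed is the trivial-generator bookkeeping; the encoding and the word count are routine.
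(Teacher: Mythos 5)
Your core argument is the same as the paper's: encode a left order into a truth assignment, observe that each relator with all generators non-trivial forces the two complementary clauses, count the $2n^3$ ``monochromatic'' words killed by a fixed assignment, and run a first moment \emph{conditioned on} $|R|=m$ with $m=8pn^3(1+o(1))$ to get $2^n(3/4+o(1))^{8cn}\to 0$ exactly for $c>(1/8)\log_{4/3}2$. Your observation that conditioning (rather than Poissonizing) is what buys the constant $0.3012$ instead of $\ln 2/2$ is precisely the point of the paper's Lemmas 3.2--3.3, and your direct hypergeometric bound $\binom{N-2n^3}{m}/\binom{N}{m}\le(1-2n^3/N)^m\le(3/4)^m$ is a clean simplification of the paper's detour through $m$ i.i.d.\ uniform relators. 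The exact count $N=8n^3-12n^2+6n$ and the verification $2n^3/N\ge 1/4$ are correct.

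The genuine gap is the clause ``one checks separately that a.a.s.\ at most $\delta n$ generators of $G$ are trivial.'' First, as stated this is false for $p$ above the collapse threshold, where $G$ is a.a.s.\ trivial and \emph{all} generators die; what you need is ``a.a.s.\ $G$ is trivial or at most $\delta n$ generators are trivial,'' and this is not a routine check --- it is where all the remaining work of the paper sits. A naive union bound over candidate sets $A$ of trivial generators, using that no relator $abc$ with $a,b\in A$, $c\notin A$ can lie in $R$, gives $\Pr\le(1-p)^{|A|^2(n-|A|)}$, and for $p=\Theta(n^{-2})$ the terms with $|A|=n-k$, $k$ small, contribute $\binom{n}{k}e^{-\Theta(k)}$, which diverges; so this route needs $p\gtrsim(\log n)n^{-2}$ (that is the paper's Lemma 4.1). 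For $p$ near $cn^{-2}$ one instead needs the non-elementary input that a.a.s.\ \emph{no} generator is trivial when $p<n^{-5/3-\eps}$ (the paper cites this from Antoniuk--{\L}uczak--{\'S}wi\k{a}tkowski). So completing your plan forces exactly the two-regime split the paper performs ($p<n^{-5/3-\eps}$ via the cited non-triviality of generators; larger $p$ via the quotient argument of Theorem~\ref{main_thm_quot}), and the ``separate check'' cannot be dispatched uniformly. Your extended first moment over pairs $(T,\sigma)$ with $|T|\le\delta n$ is fine once that input is available (and the Tietze-transformation framing is unnecessary --- restricting to relators not supported on $T$ suffices), but as written the proof is incomplete at its one genuinely delicate point.
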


Increasing $p$, we are able to show even more.

\begin{thmx}\label{main_thm_quot}
Suppose that $p\geq (1+\eps)(\log{n})n^{-2}$ for any fixed $\eps>0$. Then a~random group in~$\Gamma(n,p)$ has a.a.s.\ no non-trivial left-orderable quotients.
\end{thmx}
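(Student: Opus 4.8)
The plan is to read off, from a hypothetical non-trivial left-orderable quotient $G$ of $\Gamma=\lle S\mid R\rre$, a Boolean datum that $R$ a.a.s.\ rules out; this is the 3CNF strategy announced in the abstract, but now applied uniformly over all the ways the generators can collapse in $G$. Suppose then that $G\ne 1$ is left-orderable, with positive cone $P$ (so $G=P\sqcup\{1\}\sqcup P^{-1}$ and $PP\subseteq P$); write $\bar s_i$ for the image of $s_i$ and let $T=\{i:\bar s_i\ne 1\}$, which is non-empty because $G\ne 1$. For $i\in T$ set $a(i)=+$ if $\bar s_i\in P$ and $a(i)=-$ otherwise, and call a letter $s_i^{\eta}$ (with $i\in T$, $\eta=\pm1$) \emph{$a$-positive} if $\eta$ and $a(i)$ agree and \emph{$a$-negative} otherwise. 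Two observations are the whole engine. (i) If a relator $t_1t_2t_3\in R$ involves exactly one generator from $T$ --- call it \emph{$T$-thin} --- then in $G$ the two non-$T$ letters vanish, forcing $\bar s_i^{\pm1}=1$ for some $i\in T$, which is absurd; hence $R$ contains no $T$-thin relator. (ii) If a relator $t_1t_2t_3\in R$ involves only generators from $T$ --- call it \emph{$T$-full} --- then, since $t_1t_2t_3=1$ in $G$, its three letters are not all $a$-positive and not all $a$-negative; so $a\in\{+,-\}^T$ satisfies the 3CNF formula $\phi_T$ that has, for each $T$-full relator of $R$, the two clauses ``not all three letters $a$-positive'' and ``not all three letters $a$-negative''.

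It therefore suffices to prove that a.a.s., for every non-empty $T\subseteq\{1,\dots,n\}$, either $R$ contains a $T$-thin relator, or $\phi_T$ is unsatisfiable. (The two events are independent, depending on disjoint families of candidate relators, but it is cleaner to split on $m=|T|$ and use one or the other.)

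For $m\le n/2$ I would argue via $T$-thin relators. Their number is at least $cmn^2$ for an absolute $c>0$: choose one of the three positions for the $T$-letter, one of its $2m$ choices, and a pair of non-$T$-letters that are not mutually inverse, of which there are $2(n-m)\bigl(2(n-m)-1\bigr)\ge n^2/2$ since $n-m\ge n/2$; one checks $c=3$ works. Hence the probability that $R$ avoids all $T$-thin relators is at most $(1-p)^{cmn^2}\le e^{-cmn^2p}\le n^{-c(1+\eps)m}$, using $n^2p\ge(1+\eps)\log n$. Summing over the $\binom nm\le\min\{n^m,2^n\}$ choices of $T$ and over $1\le m\le n/2$ gives $o(1)$ because $c\ge 1$. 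For $m>n/2$ I would use $\phi_T$. Fix $a\in\{+,-\}^T$: every length-$3$ string whose letters are all $a$-positive is automatically cyclically reduced (the inverse of an $a$-positive letter is $a$-negative, hence never appears), so there are exactly $m^3$ such strings, likewise $m^3$ all-$a$-negative ones, and $a$ satisfies $\phi_T$ precisely when $R$ avoids all $2m^3$ of them; this happens with probability at most $e^{-2m^3p}\le n^{-(1+\eps)n/4}$ since $m^3>n^3/8$. A union bound over the $2^m\le2^n$ assignments $a$, the $\binom nm\le 2^n$ sets $T$, and the $\le n/2$ values of $m$ is $o(1)$, since $n^{-(1+\eps)n/4}$ beats $4^n$. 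Combining the two ranges of $m$ proves the claim above, hence the theorem.

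The delicate part --- and what pins down the constant $1$ in the hypothesis $p\ge(1+\eps)(\log n)n^{-2}$ --- is the small-$T$ estimate: there are up to $\binom nm\approx n^m$ candidate sets $T$ of size $m$, and they must be defeated by a survival probability of order $e^{-\Theta(mn^2p)}=n^{-\Theta(m)}$, so one needs $\Theta(n^2p)/\log n>1$, which is exactly what $n^2p\ge(1+\eps)\log n$ provides, with margin. The large-$T$ half, by contrast, only needs $p$ large enough to beat the $2^n$ subsets $T$, which is barely more than $n^{-2}$; indeed the case $T=\{1,\dots,n\}$ is essentially the content of Theorem~\ref{main_thm_non_lo}, where $p>cn^{-2}$ already suffices because there the collapsed set of generators is not adversarial but is governed by $R$ itself. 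The one thing to be careful about when writing this out is keeping the bookkeeping honest across the whole range of $m$ --- clauses versus absent relators, and $n^m$ versus $2^n$ as the bound for $\binom nm$.
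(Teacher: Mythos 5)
Your proof is correct and follows essentially the same two-regime strategy as the paper: words with exactly one surviving letter rule out small survivor sets $T$ (this is the paper's Lemma~\ref{non_vanishing_lem}, with $A=S\setminus T$), while the positive-cone sign assignment and the resulting $2m^3$ forbidden monochromatic words handle large $T$ (the paper's $\Phi_{R,A}$ and $P_{A,\eta}$). The differences are cosmetic: you phrase the Boolean condition directly via the positive cone rather than through Proposition~\ref{formula_prop}, and your more generous count of ``thin'' words (all positions and signs) lets you split at $|T|=n/2$ rather than at the paper's $(1-\alpha)n$ with $\alpha^2(1+\eps)>1$.
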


Theorem~\ref{main_thm_quot} is optimal up to a~constant factor. By \cite[Lemma 12]{als15}, if
$p\leq (1/25)(\log{n})n^{-2}$, then a.a.s.\ there exists a~generator $s\in S$ such that
neither $s$ nor $s^{-1}$ belongs to any relation in $R$, allowing for a~homomorphism
$\phi : \lle S | R \rre \twoheadrightarrow \mathbb{Z}$ with $\phi(s)=1$, and hence a~random group in~$\Gamma(n,p)$
has $\mathbb{Z}$ as its quotient a.a.s.

Note that if $p\geq n^{3d-3}$, for any fixed $d>1/3$, then $p$ satisfies the~hypothesis of Theorem~\ref{main_thm_quot} and hence its conclusion applies to random triangular groups in~the~\.{Z}uk's model at densities $d>1/3$ (see \cite[Section 3]{als15} for details). On the other hand, random groups in the~Gromov density model do not have non-trivial left-orderable quotients at any density $d\in (0,1)$, as shown in~\cite{orl17}.

A~countable group is left-orderable if and only if it admits a~faithful action on the real line by
orientation-preserving homeomorphisms (see \cite[Section 1.1.3]{dnr14}). Hence Theorem~\ref{main_thm_non_lo} (resp.~\ref{main_thm_quot}) is, equivalently, a~statement about non-existence of~faithful (resp. non-trivial) actions of~random triangular groups on the real line. Note that some constraints were previously known for actions of~random triangular groups on the~circle. Namely, by \cite[Theorem 3]{als15}, there exists a~constant $C'>0$ 
such that if $p\geq C'(\log{n})n^{-2}$, then a~random group $\Gamma\in\Gamma(n,p)$ has a.a.s.\ Kazhdan's property~(T), and hence by a~result of Navas, \cite{nav02}, every action of~$\Gamma$ on the~circle by orientation-preserving diffeomorphisms of~class $C^{1+\alpha}$ ($\alpha>1/2$) has a~finite image.

Another consequence of~our~proof of~Theorem~\ref{main_thm_non_lo} is a~new proof of~the~second part of~Theorem~\ref{phase_transition2} stating that a~random group in~$\Gamma(n,p)$ is a.a.s.\ non-free or trivial if $p>cn^{-2}$ for some constant ${c>0}$. However, our argument shows that this is the~case for~$c>(1/8)\log_{4/3}{2}\approx 0.3012$, which is not an optimal result. In~\cite{als15} this statement is formally shown to hold for $c\geq 3$, but the~method presented there works as soon as $c> 1/8=0.125$, as we briefly explain now. First, if $p\geq C'(\log n)n^{-2}$, then non-freeness or triviality follows from 
property (T). If $p$ is smaller, then in particular $p<n^{-3/2-\eps}$ for a~fixed $\eps>0$. This guarantees that the~natural presentation complex of~a~random $\Gamma\in\Gamma(n,p)$ is a.a.s.\ aspherical, and hence the~Euler characteristic of~$\Gamma$ is a.a.s.\ equal to ${\chi(\Gamma)=1-n+|R|}$. If in addition $p>cn^{-2}$ for a~fixed $c>1/8$, then a.a.s.\ $|R|=8n^3p(1+o(1))>n$, hence $\chi(\Gamma)>0$ and $\Gamma$ is not a~free group a.a.s.

\textbf{Outline of~the~proofs.}
In~the~proofs of~our results we use 3CNF propositional formulas, i.e. formulas of~form $(a_1 \vee b_1 \vee c_1)\wedge \ldots \wedge (a_k \vee b_k \vee c_k)$, where each $a_i, b_i, c_i$ is either of form $x$ or $\neg x$
for a~Boolean variable $x$.

To prove Theorem $\ref{main_thm_non_lo}$ for smaller values of $p$, we translate a~random triangular presentation $\Gamma=\lle S|R \rre$ into a~random 3CNF formula $\Phi_R$ constructed as the~conjunction of the~expressions of form 
\begin{equation*}
\big( x_i^{\eps_i} \vee x_j^{\eps_j} \vee x_k^{\eps_k}\big) \wedge \big( x_i^{-\eps_i} \vee x_j^{-\eps_j} \vee x_k^{-\eps_k}\big),
\end{equation*}
corresponding to relators $r=s_i^{\eps_i}s_j^{\eps_j}s_k^{\eps_k}\in R$, where $x_1, x_2, \ldots, x_n$ are Boolean variables and our~convention is that $x^1=x$ and $x^{-1}=\neg x$. Every left-order on $\Gamma$ a.a.s.\ induces a~truth assignment satisfying $\Phi_R$. We show, however, that a.a.s.\ $\Phi_R$ is unsatisfiable, hence $\Gamma$ is not left-orderable. For larger values of $p$ and in the proof of Theorem $\ref{main_thm_quot}$, we consider a~collection of~similarly constructed random 3CNF formulas $\Phi_{R,A}$, indexed by certain subsets $A\subseteq S$, such that a.a.s.\ every non-trivial left-orderable quotient of~a~random $\Gamma\in\Gamma(n,p)$ leads to satisfiability of at least one of~those~formulas. In this case we prove that a.a.s.\ none of the formulas $\Phi_{R,A}$ are satisfiable and the desired conclusion follows.

We note that the~constant $(1/8)\log_{4/3}{2}\approx 0.3012$ in~Theorem~\ref{main_thm_non_lo}
can probably be improved by using the existing research on the~random 3-NAESAT problem. To see the~connection, let $\Phi'_R$ be a~random formula constructed as the~conjunction of the clauses $x_i^{\eps_i} \vee x_j^{\eps_j} \vee x_k^{\eps_k}$, taken for each $r=s_i^{\eps_i}s_j^{\eps_j}s_k^{\eps_k}\in R$.
We say that $\Phi'_R$ is \emph{Not-All-Equal-satisfiable} (\emph{NAE-satisfiable}) if there exists a~truth assignment $\eta$ such that in every clause $x_i^{\eps_i} \vee x_j^{\eps_j} \vee x_k^{\eps_k}$ at least one, but not all, of~the~literals $x_i^{\eps_i}, x_j^{\eps_j}, x_k^{\eps_k}$ is true under $\eta$. It is straightforward to see that $\Phi_R$ is satisfiable if and only if $\Phi'_R$ is NAE-satisfiable. When $p=cn^{-2}$, a formula $\Phi'_R$ is similar to a~uniformly random 3CNF formula on $n$ variables with $\lfloor 8cn\rfloor$ clauses, which
is known to be a.a.s.\ not NAE-satisfiable if $8c$ is larger than a~certain threshold $\alpha$ (e.g. \cite{cop12}).
The conclusions of Theorem~\ref{main_thm_non_lo} should thus hold for any $c>\alpha/8$. In this article we do not aim to give the optimal constant for Theorem~\ref{main_thm_non_lo}, which allows us to provide a more self-contained proof.

\textbf{Organisation.} In Section~\ref{sec_prel} we introduce our notation, recall basic properties of left-orderable groups and construct propositional formulas central to our arguments. In Section~\ref{sec_non_lo} we prove Theorem~\ref{main_thm_non_lo} under the additional assumption that $p<n^{-5/3-\eps}$. The remaining case
is a~direct consequence of~Theorem~\ref{main_thm_quot}, which we prove in Section~\ref{sec_qout}.

\textbf{Acknowledgements.} The~author would like to thank Piotr Nowak and Piotr Przytycki for valuable discussions
and suggestions. The author was partially supported by the European Research Council (ERC) under the European Union's Horizon 2020 research and innovation programme (grant agreement no. 677120-INDEX) and by (Polish) Narodowe Centrum Nauki, UMO-2018/30/M/ST1/00668.

\section{Preliminaries}\label{sec_prel}

Most of~the notions we use depend implicitly on $n$. By $o(f(n))$ we denote any function $g(n)$
such that $g(n)/f(n)\rightarrow 0$ as $n\rightarrow\infty$. Throughout, we assume $\Gamma=\lle S | R\rre$ is group given by a~presentation with the~set of formal generators $S=\{s_1,s_2,\ldots, s_n\}$ and $R \subseteq \red$, where $W_3$ is the~set of all cyclically reduced words of length 3 over $S\cup S^{-1}$. We denote by $\iota : F(S) \twoheadrightarrow \Gamma$ the~associated epimorphism of the~free group generated by $S$. For any $A\subseteq S$ define

\begin{equation*}
R_A = \{s_i^{\eps_i}s_j^{\eps_j}s_k^{\eps_k} \in R : s_i, s_j, s_k \in A, \eps_i, \eps_j, \eps_k \in \{-1,1\} \}
\end{equation*}
to be the set of words over $A\cup A^{-1}$, belonging to $R$.

\begin{defin}\label{def_order}
We say that a~group $G$ is \emph{left-ordered} by a~linear order $\leq$ if $\leq$ is invariant under left-multiplication by $G$, i.e. satisfies the condition $a\leq b \implies ga\leq gb$ for all $a,b,g\in G$. We say that $G$ is \emph{left-orderable} if it is \emph{left-ordered} by some linear order $\leq$.
\end{defin}

Given a~group $G$ left-ordered by $\leq$, we use $<$ and $>$ as usual shorthands. The~following
properties of~left-orderable groups are straightforward consequences of~Definition~\ref{def_order}.

\begin{rem}\label{left_order_rem}
If a~group $G$ is left-ordered by $\leq$, then the~following hold.
\begin{enumerate}
	\item For every non-trivial $g\in G$, $g>1_G$ if and only if $g^{-1}<1_G$.
	\item If $g_1, g_2, \ldots, g_m \in G$ are such that $g_i>1_G$ for every $i$, then also $g_1g_2\ldots g_m > 1_G$.
\end{enumerate}
\end{rem}

Now we define the~propositional formulas central to our arguments. Let $x_1,x_2,\ldots, x_n$ be Boolean variables. We adopt a~convention that $x_i^1 = x_i$ and $x_i^{-1} = \neg x_i$ for all $i$. For every $r=s_i^{\eps_i} s_j^{\eps_j} s_k^{\eps_k} \in W_3$ with $\eps_i, \eps_j, \eps_k \in \{-1,1\}$, set

\begin{equation*}
\phi_r = \big(x_i^{\eps_i} \vee x_j^{\eps_j} \vee x_k^{\eps_k}\big)\wedge \big(x_i^{-\eps_i} \vee x_j^{-\eps_j} \vee x_k^{-\eps_k}\big).
\end{equation*} 

For any $A\subseteq S$, let

\begin{equation*}
\Phi_{R, A} = \bigwedge_{r\in R_A} \phi_r
\end{equation*}
and let $\Phi_R=\Phi_{R,S}$. The~definition of $\Phi_{R,A}$ is justified by the following.

\begin{prop}\label{formula_prop}
	Suppose $q : \Gamma \twoheadrightarrow Q$ is an~epimorphism onto a~left-orderable group $Q$, such that $\ker(q\iota)\cap A =\emptyset$.
	Then $\Phi_{R,A}$ is satisfiable.
\end{prop}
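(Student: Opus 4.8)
The plan is to use the left-order on $Q$ to produce a truth assignment to the Boolean variables $x_1,\ldots,x_n$ and then check that each conjunct $\phi_r$ with $r\in R_A$ is satisfied. Concretely, I would first fix a linear order $\leq$ witnessing that $Q$ is left-orderable. For each generator $s_i\in S$, consider its image $q\iota(s_i)\in Q$. The key observation is that for $s_i\in A$ we have $s_i\notin\ker(q\iota)$, so $q\iota(s_i)\neq 1_Q$, and hence by Remark~\ref{left_order_rem}(1) exactly one of $q\iota(s_i)>1_Q$ or $q\iota(s_i)<1_Q$ holds. I would then define the truth assignment $\eta$ by setting $\eta(x_i)=\text{true}$ if $q\iota(s_i)>1_Q$ and $\eta(x_i)=\text{false}$ if $q\iota(s_i)<1_Q$ (for $i$ with $s_i\notin A$ the value of $\eta(x_i)$ is irrelevant and can be chosen arbitrarily). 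With this convention, for $s_i\in A$ and $\eps\in\{-1,1\}$, the literal $x_i^{\eps}$ is true under $\eta$ precisely when $q\iota(s_i)^{\eps}=q\iota(s_i^{\eps})>1_Q$.

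Next I would verify that each $\phi_r$ for $r\in R_A$ is satisfied. Write $r=s_i^{\eps_i}s_j^{\eps_j}s_k^{\eps_k}\in R_A$, so $s_i,s_j,s_k\in A$. Since $r$ is a relator of $\Gamma$ and $q\iota$ is a homomorphism killing the relators, we have $q\iota(s_i^{\eps_i})\,q\iota(s_j^{\eps_j})\,q\iota(s_k^{\eps_k})=1_Q$. Now I claim the three literals $x_i^{\eps_i}, x_j^{\eps_j}, x_k^{\eps_k}$ cannot all be false under $\eta$: if they were, then $q\iota(s_i^{\eps_i}), q\iota(s_j^{\eps_j}), q\iota(s_k^{\eps_k})$ would each be $<1_Q$, equivalently each of their inverses $>1_Q$, so by Remark~\ref{left_order_rem}(2) the product $q\iota(s_i^{\eps_i})^{-1}q\iota(s_j^{\eps_j})^{-1}q\iota(s_k^{\eps_k})^{-1}>1_Q$; but multiplying on the left by this element, applied to the relation, also shows the product of the non-inverted elements would be forced below $1_Q$. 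More cleanly: from $q\iota(s_i^{\eps_i})q\iota(s_j^{\eps_j})q\iota(s_k^{\eps_k})=1_Q$ and all three factors $<1_Q$, repeated use of left-invariance gives $1_Q = q\iota(s_i^{\eps_i})q\iota(s_j^{\eps_j})q\iota(s_k^{\eps_k}) < q\iota(s_i^{\eps_i})q\iota(s_j^{\eps_j}) < q\iota(s_i^{\eps_i}) < 1_Q$, a contradiction. Hence at least one of $x_i^{\eps_i}, x_j^{\eps_j}, x_k^{\eps_k}$ is true, so the first clause of $\phi_r$ is satisfied. The same argument applied to $r^{-1}=s_k^{-\eps_k}s_j^{-\eps_j}s_i^{-\eps_i}$, whose image is also $1_Q$, shows that not all of $x_i^{-\eps_i}, x_j^{-\eps_j}, x_k^{-\eps_k}$ are false, so the second clause is satisfied too. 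Therefore $\eta$ satisfies $\phi_r$.

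Since this holds for every $r\in R_A$, the assignment $\eta$ satisfies $\Phi_{R,A}=\bigwedge_{r\in R_A}\phi_r$, so $\Phi_{R,A}$ is satisfiable, completing the proof. I do not expect a genuine obstacle here; the argument is essentially a dictionary between the sign function of a left-order and a Boolean assignment. The only point requiring a little care is the use of the hypothesis $\ker(q\iota)\cap A=\emptyset$, which is exactly what guarantees that every variable $x_i$ with $s_i\in A$ receives a well-defined truth value (because $q\iota(s_i)$ is strictly comparable to $1_Q$); the relators involving only generators in $A$ are then the ones whose clauses we can control, which is why the conjunction is taken over $R_A$ rather than all of $R$.
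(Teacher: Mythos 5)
Your proof is correct and follows essentially the same route as the paper: define $\eta(x_i)$ by the sign of $q\iota(s_i)$ relative to $1_Q$ (well-defined because $\ker(q\iota)\cap A=\emptyset$), then use the fact that three nontrivial elements with product $1_Q$ cannot all lie on the same side of $1_Q$, applied to both $r$ and $r^{-1}$ to get both clauses of $\phi_r$. The only cosmetic difference is that you argue via ``not all three can be negative'' with an explicit left-invariance chain, while the paper invokes the dual statement (Remark~\ref{left_order_rem}(2), ``a product of positives is positive'') directly; these are equivalent.
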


Of our special interest is the case when $Q=\Gamma$, $q=\text{id}_\Gamma$ and $A=S$.

\begin{cor}\label{single_formula}
	Suppose $\Gamma$ is left-orderable and $\ker(\iota) \cap S=\emptyset$. Then $\Phi_R$ is satisfiable.
\end{cor}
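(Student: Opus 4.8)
The plan is to obtain this as the immediate special case of Proposition~\ref{formula_prop} corresponding to the choices $Q = \Gamma$, $q = \mathrm{id}_\Gamma$, and $A = S$. Under these choices the composite $q\iota$ equals $\iota$ itself, so the Proposition's hypothesis $\ker(q\iota)\cap A = \emptyset$ reads $\ker(\iota)\cap S = \emptyset$, which is exactly one of the assumptions of the corollary, while the requirement that $Q$ be left-orderable becomes the assumption that $\Gamma$ is left-orderable, again given. Since $\Phi_{R,S} = \Phi_R$ by definition, the Proposition's conclusion that $\Phi_{R,A}$ is satisfiable specialises precisely to the assertion that $\Phi_R$ is satisfiable. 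Thus the corollary requires nothing beyond substituting these data into Proposition~\ref{formula_prop}, and there is no separate obstacle to overcome.

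For completeness I would indicate how the argument reads in this concrete situation, which is transparent enough to verify by hand. Fixing a left-order $\leq$ on $\Gamma$ and using $\ker(\iota)\cap S = \emptyset$, every $\iota(s_i)$ is non-trivial, so, the order being linear, exactly one of $\iota(s_i) > 1_\Gamma$ and $\iota(s_i) < 1_\Gamma$ holds, and by Remark~\ref{left_order_rem}(1) these are equivalent to $\iota(s_i)^{-1} < 1_\Gamma$ and $\iota(s_i)^{-1} > 1_\Gamma$ respectively. I would define a truth assignment $\eta$ by declaring $x_i$ true precisely when $\iota(s_i) > 1_\Gamma$; checking the cases $\eps_i = \pm 1$ then shows that a literal $x_i^{\eps_i}$ is true under $\eta$ if and only if $\iota(s_i)^{\eps_i} > 1_\Gamma$.

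It remains to verify that each conjunct $\phi_r$ is satisfied for every relator $r = s_i^{\eps_i}s_j^{\eps_j}s_k^{\eps_k}\in R$, for which $\iota(r) = \iota(s_i)^{\eps_i}\iota(s_j)^{\eps_j}\iota(s_k)^{\eps_k} = 1_\Gamma$. Writing $a = \iota(s_i)^{\eps_i}$, $b = \iota(s_j)^{\eps_j}$, $c = \iota(s_k)^{\eps_k}$, so that $abc = 1_\Gamma$, the second sub-clause is handled at once: if all of $a, b, c$ were $> 1_\Gamma$ then Remark~\ref{left_order_rem}(2) would give $abc > 1_\Gamma$, contradicting $abc = 1_\Gamma$, so one of the literals $x_i^{-\eps_i}, x_j^{-\eps_j}, x_k^{-\eps_k}$ is true. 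The only point needing care, and the closest thing to an obstacle here, is the first sub-clause in the non-abelian setting: rather than inverting $abc$ directly, I would use the identity $c = b^{-1}a^{-1}$, observe that $a < 1_\Gamma$ and $b < 1_\Gamma$ force $a^{-1}, b^{-1} > 1_\Gamma$, and apply Remark~\ref{left_order_rem}(2) to conclude $c = b^{-1}a^{-1} > 1_\Gamma$; hence $a, b, c$ cannot all be $< 1_\Gamma$, and one of the literals $x_i^{\eps_i}, x_j^{\eps_j}, x_k^{\eps_k}$ is true. Both sub-clauses holding for every $r$, the assignment $\eta$ satisfies $\Phi_R$.
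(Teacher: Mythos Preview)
Your proposal is correct and matches the paper's approach exactly: the corollary is obtained as the special case $Q=\Gamma$, $q=\mathrm{id}_\Gamma$, $A=S$ of Proposition~\ref{formula_prop}, with $\Phi_{R,S}=\Phi_R$. Your supplementary concrete verification is essentially the proof of Proposition~\ref{formula_prop} specialised to this case; the only cosmetic difference is that for the first sub-clause the paper applies Remark~\ref{left_order_rem}(2) to $r^{-1}$ directly, whereas you rewrite $c=b^{-1}a^{-1}$ and argue from there, which is equivalent.
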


\begin{proof}[Proof of Proposition~\ref{formula_prop}]
	Let $Q$ be left-ordered by $\leq$.
	We construct a~truth assignment $\eta$, satisfying $\Phi_{R,A}$, as follows.
	Let $x_i$ be any variable such that $s_i\in A$. Then $q\iota(s_i)\neq 1_Q$. Set
	
	\begin{equation*}
		\eta(x_i) = \begin{cases}
			T & \text{if }q\iota(s_i)>1_Q,\\
			F & \text{if }q\iota(s_i)<1_Q.
		\end{cases}
	\end{equation*}	

	Function $\eta$ extends naturally to all propositional formulas over variables $x_i$, for which $s_i\in A$. By Remark~\ref{left_order_rem}(1), for every such variable $x_i$ and every $\eps\in\{-1,1\}$, the~value of~$\eta(x_i^\eps)$ represents the~validity of~the~statement $q\iota(s_i^\eps) > 1_Q$.
	
	Now consider any $r=s_i^{\eps_i} s_j^{\eps_j} s_k^{\eps_k} \in R_A$. Since $1_Q=q\iota(r)=q\iota\left(s_i^{\eps_i}\right)q\iota(s_j^{\eps_j})q\iota(s_k^{\eps_k})$, by Remark~\ref{left_order_rem}(2) at least one of the elements $q\iota\left(s_i^{\eps_i}\right)$, $q\iota(s_j^{\eps_j})$, $q\iota(s_k^{\eps_k})$ is not greater than $1_Q$ and hence $\eta$ satisfies the formula $x_i^{-\eps_i}\vee x_j^{-\eps_j} \vee x_k^{-\eps_k}$. Similarly, as 
	$1_Q=q\iota(r^{-1})=q\iota\left(s_k^{-\eps_k}\right)q\iota(s_j^{-\eps_j})q\iota(s_i^{-\eps_i})$, we establish
	that $\eta$ satisfies $x_i^{\eps_i}\vee x_j^{\eps_j} \vee x_k^{\eps_k}$ and so it satisfies $\phi_r$.
	
	Finally, $\eta$ satisfies $\Phi_{R,A}$, being a~conjunction of~formulas $\phi_r$ for $r\in R_A$.
	
\end{proof}

\section{Non-left-orderability for lower probabilities}\label{sec_non_lo}

In this section we prove Theorem~\ref{main_thm_non_lo} under the~additional assumption that
$p<n^{-5/3-\eps}$ for some $\eps>0$. The case of~larger $p$ follows from Theorem~\ref{main_thm_quot}, which is proved independently in~Section~\ref{sec_qout}.

Throughout, $\mathbb{P}$ is the~probability function in the~model $\Gamma(n,p)$.

Our choice of~an upper bound for $p$ is motivated by~the~following fact.

\begin{lem}[{\cite[Corollary 10]{als15}}]\label{non_triv_gen}
	Suppose $p<n^{-5/3-\eps}$ for a~constant $\eps>0$. Then a.a.s.\ ${\ker(\iota)\cap S = \emptyset}$
	in $\Gamma(n,p)$.
\end{lem}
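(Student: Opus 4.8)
The plan is a first-moment argument over van Kampen diagrams. By van Kampen's lemma, $\ker(\iota)\cap S\neq\emptyset$ in $\Gamma=\lle S\mid R\rre$ if and only if there is a van Kampen diagram over this presentation whose boundary word is a single generator; among all such diagrams choose one, $D$, of minimal area, which is then reduced (a mirror pair of adjacent $2$-cells could otherwise be removed, lowering the area). It therefore suffices to prove that the expected number $N$ of reduced van Kampen diagrams with single-generator boundary satisfies $N\to 0$, since then a.a.s.\ $N=0$ and no generator is trivial.

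Fix $a=|D^{(2)}|$. Because every relator has length $3$ and is cyclically reduced, in a reduced $D$ every $2$-cell is a triangle with cyclically reduced boundary label and no edge bounds two mutually mirror $2$-cells. First discard negligible degeneracies: a.a.s.\ $R$ contains no relator of the form $s_i^{\pm 3}$ (expected number $O(np)=o(1)$), so no $2$-cell is a proper power. Since the boundary word has length $1$, the boundary is a single loop edge, so Euler's formula $V-E+a=1$ together with $3a=2E_{\mathrm{int}}+1$ gives $E=(3a+1)/2$, $V=(a+3)/2$; in particular $a$ is odd and $a\ge 3$ (an area-$1$ diagram would need its $2$-cell to traverse a single non-loop edge twice, which is impossible). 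We then bound
\[
N\;\le\;\sum_{a\ge 3}\ \sum_{T}\ (\#\text{ generator-labelings of }T)\cdot p^{\,c(T)},
\]
where $T$ ranges over combinatorial types (a triangulated planar disk with edge-orientations and a marked boundary loop; there are at most $C^{a}$ with $a$ cells), a labeling assigns a generator to each edge so that each $2$-cell reads a relator, and $c(T)$ is the number of distinct relator words used.

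For a type $T$ in which no relator is repeated, build $D$ one $2$-cell at a time along the disk: the first cell gives a factor $O(n^{3})$, and a later cell glued along $j\in\{1,2,3\}$ boundary edges introduces $3-j$ new edge-labels (hence a factor $O(n^{2}),O(n),O(1)$ respectively) while changing the current boundary length by $+1,-1,-3$. Since the boundary closes up to length $1$, tallying these changes forces a type with no $3$-edge gluings to have exactly $(a-3)/2$ one-edge and $(a+1)/2$ two-edge gluings, so the number of labelings is $O(n^{(3a+1)/2})$ and $T$ contributes $O\big(n^{(3a+1)/2}p^{a}\big)=O\big(n^{1/2}(n^{3/2}p)^{a}\big)$. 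As $n^{3/2}p\le n^{-1/6-\eps}=o(1)$, the sum over $a$ converges and is dominated by $a=3$, giving $O(n^{5}p^{3})=o(1)$. That this bound is tight — and the reason the exponent is $5/3$ rather than the $3/2$ governing hyperbolicity in Theorem~\ref{phase_transition1} — is witnessed by the area-$3$ configuration $r_1=s_1s_2s_3$, $r_2=s_3^{-1}s_2^{-1}s_4$, $r_3=s_1^{-1}s_4^{-1}s_5$ with $s_1,\dots,s_5$ pairwise distinct: these three relators force $s_5=1$ in $\Gamma$, they arise in $\Theta(n^{5})$ ways, and all lie in $R$ with probability $p^{3}$.

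The hard part is controlling types that repeat a relator, i.e.\ $c(T)<a$. When the repeated copy is glued along a single edge (or is the initial cell of a branch), the repetition forces all of its labels, saving a factor $n^{2}$ against a cost of only $p^{-1}=n^{5/3+\eps}$, a net gain; but a repeated cell glued along two or three edges saves only $n$ or $n^{0}$, which does not obviously beat $p^{-1}$. One must show that such gluings impose compensating coincidences among the labels elsewhere in $D$ (e.g.\ two cells sharing two edges carry relators agreeing in two of their three letters, constraining the neighbouring cells), so that for every combinatorial type the product $(\#\text{ labelings})\cdot p^{\,c(T)}$ is $O(n^{5}p^{3})$ up to a factor summable in $a$. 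For $a=3$ this is elementary — two cells with equal relators cannot be adjacent, so in a three-cell disk a repeated relator forces boundary length $\ge 3$, contradicting the single-generator boundary — and the general case is precisely the combinatorial bookkeeping of \cite{als15}. Granting it, $N=o(1)$ and the lemma follows; conversely a second-moment argument over the displayed family of configurations (which are almost independent, since distinct ones rarely share a relator) shows that for $p\gg n^{-5/3}$ a generator dies a.a.s., so the hypothesis $p<n^{-5/3-\eps}$ is sharp up to the $\eps$.
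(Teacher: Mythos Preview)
The paper does not give a proof of this lemma at all: it is quoted as \cite[Corollary~10]{als15} and used as a black box. There is therefore no in-paper argument to compare your proposal against.

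Your outline is in the right spirit --- a first-moment bound over reduced van Kampen diagrams with single-letter boundary --- and the non-repetition estimate is carried out correctly: the shelling count gives $O(n^{(3a+1)/2})$ labelings, hence a contribution $O\big(C^a n^{1/2}(n^{3/2}p)^a\big)$ per area~$a$, summable and dominated by $a=3$, with the explicit three-relator configuration witnessing sharpness at $p\sim n^{-5/3}$. That part is a genuine proof of the easy half.

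However, the proposal is not a complete proof, and you say so yourself. For types with $c(T)<a$ you only argue heuristically that repeated relators force compensating coincidences, and then write ``the general case is precisely the combinatorial bookkeeping of \cite{als15}. Granting it, \ldots''. But that bookkeeping \emph{is} the content of the cited corollary; invoking it here makes the argument circular. A self-contained proof must actually bound, for every combinatorial type and every repetition pattern, the product $(\#\text{labelings})\cdot p^{c(T)}$ by something summable in~$a$. Without that step you have an outline of the proof in \cite{als15}, not an independent one.

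Two smaller technical points are also glossed over. First, your Euler/shelling computation tacitly assumes a simplicial disk built one triangle at a time along a contiguous boundary arc; a van Kampen diagram with a length-one boundary necessarily has a loop edge on its boundary and may have further multi-edges and self-foldings, so the ``$j$ edges glued, $3-j$ new labels'' accounting needs to be justified for such non-simplicial disks. Second, the exclusion of $a=1$ deserves one more sentence: a single $2$-cell with boundary word of length~$1$ forces two of its three sides to fold together, which makes the relator non-(cyclically-)reduced, hence not in $W_3$; your phrase ``traverse a single non-loop edge twice'' is gesturing at this but does not quite say it.
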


Before proving Theorem~\ref{main_thm_non_lo} we need to establish two simple lemmas, which let us pass to a~different model of~randomness for~the~set of relators~$R$ in computations of probabilities. First we need some asymptotic control over the~size of~the~set~$R$.

\begin{lem}\label{lem_conc}
	Suppose $p>n^{-3+\eps}$ for some $\eps>0$. Then $|R|\in\left((1-\delta)8pn^3,(1+\delta)8pn^3 \right)$ a.a.s.\ for some $\delta=\delta(n)=o(1)$.
\end{lem}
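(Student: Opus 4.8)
The plan is to recognise $|R|$ as a binomial random variable and then apply Chebyshev's inequality. The first ingredient is the size of the ground set: a length-$3$ word $a_1a_2a_3$ over the $2n$-letter alphabet $S\cup S^{-1}$ is cyclically reduced exactly when $a_1\neq a_2^{-1}$, $a_2\neq a_3^{-1}$ and $a_3\neq a_1^{-1}$, so a short inclusion--exclusion starting from the $(2n)^3$ words of length $3$ --- each of the three forbidden coincidences deletes $4n^2$ words, each pairwise intersection of these bad sets has $2n$ words, and their triple intersection is empty because no letter equals its own inverse --- gives
\begin{equation*}
|W_3| = 8n^3 - 12n^2 + 6n = 8n^3\Big(1 + O\big(\tfrac1n\big)\Big).
\end{equation*}

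Since each word of $W_3$ lies in $R$ independently with probability $p$, we have $|R|\sim\mathrm{Bin}(|W_3|,p)$, hence
\begin{equation*}
\mathbb{E}|R| = |W_3|\,p = 8pn^3\Big(1 + O\big(\tfrac1n\big)\Big)
\qquad\text{and}\qquad
\Var|R| = |W_3|\,p(1-p) \leq 8pn^3\Big(1 + O\big(\tfrac1n\big)\Big).
\end{equation*}
The hypothesis $p > n^{-3+\eps}$ forces $pn^3 > n^\eps \to \infty$, so the standard deviation of $|R|$ is of order $(pn^3)^{1/2}=o(pn^3)$, which is what makes the relative fluctuation vanish. Concretely I would set
\begin{equation*}
\delta = \delta(n) = (pn^3)^{-1/4} + n^{-1/2},
\end{equation*}
which is $o(1)$, satisfies $\delta\gg 1/n$ (so that the $O(1/n)$ gap between $\mathbb{E}|R|$ and $8pn^3$ is eventually at most $\tfrac{\delta}{2}\cdot 8pn^3$), and satisfies $\delta^2 pn^3\geq (pn^3)^{1/2}\to\infty$.

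With this choice, Chebyshev's inequality together with the variance bound gives, for all large $n$,
\begin{equation*}
\mathbb{P}\Big(\big|\,|R| - \mathbb{E}|R|\,\big| \geq \tfrac{\delta}{2}\cdot 8pn^3\Big)
\;\leq\; \frac{\Var|R|}{\big(\tfrac{\delta}{2}\cdot 8pn^3\big)^{2}}
\;\leq\; \frac{1+o(1)}{2\,\delta^2 pn^3},
\end{equation*}
which tends to $0$ since $\delta^2 pn^3\to\infty$; combined with $\big|\mathbb{E}|R| - 8pn^3\big|\leq\tfrac{\delta}{2}\cdot 8pn^3$ this shows that a.a.s.\ $|R|\in\big((1-\delta)8pn^3,(1+\delta)8pn^3\big)$, as required. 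I do not expect any genuine obstacle here; the only point worth a comment is that one fixed function $\delta$ must work uniformly over all admissible $p$, including $p$ near $1$, but increasing $p$ only sharpens the concentration (the factor $1-p$ shrinks $\Var|R|$ while the relative correction stays $O(1/n)$), so nothing further is needed.
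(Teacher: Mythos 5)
Your proof is correct and follows essentially the same route as the paper: identify $|R|$ as $\mathrm{Bin}(|W_3|,p)$, note $|W_3|=8n^3(1+o(1))$, and apply Chebyshev with a $\delta=o(1)$ chosen so that $\delta^2 pn^3\to\infty$ (the paper takes $\delta=(p|W_3|)^{-1/3}$ and then "adjusts" it to absorb the gap between $\mathbb{E}|R|$ and $8pn^3$, which you handle explicitly via the $n^{-1/2}$ term).
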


\begin{proof}
	
	Under the~model $\Gamma(n,p)$, $|R|$ has the binomial distribution $B\left(|W_3|, p\right)$, so that
	$\mathbb{E}|R| = p|W_3|$
	 and $\Var{|R|} = p(1-p)|W_3|$. 
	Let $\delta = (p|W_3|)^{-1/3}$.
	
	By the~Chebyshev's inequality, 
	
	\begin{equation*}
	\mathbb{P}\big( \big||R|-\mathbb{E}|R| \big| \geq \delta\mathbb{E}|R|\big)\leq \frac{\Var |R|}{\delta^2(\mathbb{E}|R|)^2}
	= \frac{p(1-p)|W_3|}{\delta^2p^2|W_3|^2}\leq \frac{1}{\delta^2 p|W_3|}=\delta.
	\end{equation*}
	
	Now note that $|W_3|=8n^3(1+o(1))$, as $2n(2n-1)(2n-2)\leq |W_3| \leq (2n)^3$ and hence ${p|W_3| = 8pn^3(1+o(1))>8n^{\eps}(1+o(1))}$, so that $\delta = o(1)$.
	
	We have $|R|\in\left((1-\delta)\mathbb{E}|R|,(1+\delta)\mathbb{E}|R| \right)$ a.a.s. Since $\mathbb{E}|R| = 8pn^3(1+o(1))$, 
	$\delta$ can be adjusted so that the~desired conclusion holds.
	
\end{proof}

Let $c_0=(1/8)\log_{4/3}{2}$. From now on to the~end of the~section we assume that $p\in (cn^{-2}, n^{-5/3-\eps})$, where $\eps>0$ and $c>c_0$. Let $\delta$ be such that the~conditions of~Lemma~\ref{lem_conc} hold and denote $I_\delta =((1-\delta)8pn^3, (1+\delta)8pn^3)\cap \mathbb{N}$. 

Fix $m$ and let $r_1, r_2,\ldots, r_m$ be independent random uniform words picked from $W_3$. Let $\mathbb{P}_m$ be the~associated probability function and let $R_m=\{r_1,r_2,\ldots, r_m\}$. The~following lemma enables us to 
work with $R_m$ in place of $R$.

\begin{lem}\label{switch_prob}
	There exists $\theta=\theta(n)=o(1)$ such that, for every property $\mathcal{P}$ of subsets of~$W_3$, and~every $m\in I_\delta$,
	\begin{equation*}
	\mathbb{P}\big(R\text{ satisfies }\mathcal{P}\text{ }\big|\text{ }|R|=m \big) \leq (1+\theta)\ \mathbb{P}_m\big(R_m\text{ satisfies }\mathcal{P}\big).
	\end{equation*}
\end{lem}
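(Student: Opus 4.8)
The plan is to compare the two random models for $R$ directly by conditioning on cardinality. Under $\Gamma(n,p)$, the event $|R|=m$ means that $R$ is a uniformly random $m$-element \emph{subset} of $W_3$, since the binomial model assigns equal probability to all subsets of a given size. On the other hand, $R_m = \{r_1,\ldots,r_m\}$ is obtained by sampling $m$ words uniformly from $W_3$ \emph{with} repetition; the two distributions agree once we condition on the $r_i$ being pairwise distinct, i.e.\ on $|R_m|=m$. So the first step is to write, for any property $\mathcal{P}$,
\begin{equation*}
\mathbb{P}\big(R\text{ satisfies }\mathcal{P}\ \big|\ |R|=m\big) = \mathbb{P}_m\big(R_m\text{ satisfies }\mathcal{P}\ \big|\ |R_m|=m\big),
\end{equation*}
and then bound the right-hand side by $\mathbb{P}_m(R_m\text{ satisfies }\mathcal{P})/\mathbb{P}_m(|R_m|=m)$.

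The second step is to show $\mathbb{P}_m(|R_m|=m)^{-1} \leq 1+\theta$ for some $\theta=o(1)$, uniformly over $m\in I_\delta$. This is a birthday-type estimate: with $N=|W_3|=8n^3(1+o(1))$ possible words and $m$ samples,
\begin{equation*}
\mathbb{P}_m(|R_m|=m) = \prod_{i=0}^{m-1}\Big(1-\frac{i}{N}\Big) \geq \Big(1-\frac{m}{N}\Big)^{m} \geq 1 - \frac{m^2}{N}.
\end{equation*}
Since $m\in I_\delta$ gives $m = O(pn^3)$ and $p < n^{-5/3-\eps}$, we get $m^2/N = O(p^2 n^6 / n^3) = O(p^2 n^3) = O(n^{-1/3-2\eps})=o(1)$, so $\mathbb{P}_m(|R_m|=m) \geq 1 - o(1)$ uniformly in $m\in I_\delta$, hence its reciprocal is $1+o(1)$; call the resulting bound $1+\theta$ with $\theta = \theta(n) = o(1)$ not depending on $m$ or on $\mathcal{P}$.

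Combining these two steps yields
\begin{equation*}
\mathbb{P}\big(R\text{ satisfies }\mathcal{P}\ \big|\ |R|=m\big) \leq \frac{\mathbb{P}_m(R_m\text{ satisfies }\mathcal{P})}{\mathbb{P}_m(|R_m|=m)} \leq (1+\theta)\,\mathbb{P}_m\big(R_m\text{ satisfies }\mathcal{P}\big),
\end{equation*}
which is exactly the claim. The only genuine point requiring care is the uniformity of $\theta$ over all $m\in I_\delta$ and over all properties $\mathcal{P}$; this is why I want $\theta$ to depend only on the crude bound $\max_{m\in I_\delta} m^2/N$, which is controlled purely by the hypothesis $p<n^{-5/3-\eps}$ (together with $p>n^{-3+\eps}$ from Lemma~\ref{lem_conc}, already in force). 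I do not expect a serious obstacle here — the main subtlety is simply to recognize that conditioning the binomial model on $|R|=m$ gives the uniform-subset model, which matches the ordered-sampling model conditioned on distinctness, and that the distinctness probability is $1-o(1)$ precisely because $p$ is well below the $n^{-3/2}$ regime.
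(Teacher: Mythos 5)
Your proof is correct and follows essentially the same route as the paper: identify both conditioned distributions with the uniform $m$-subset model, divide by the probability of distinctness, and control that probability by a Bernoulli/birthday estimate using $m^2/|W_3| = O(p^2n^3) = o(1)$, which is exactly where the hypothesis $p<n^{-5/3-\eps}$ enters. The uniformity of $\theta$ over $m\in I_\delta$ and over $\mathcal{P}$ is handled the same way in the paper, so there is nothing to add.
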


\begin{proof}
	
	In $\Gamma(n,p)$, conditional on $|R|=m$, the~set $R$ is a~random uniform subset of $W_3$, of~size $m$.
	
	Let $\mathcal{D}_m$ be the~event that $r_1, r_2, \ldots, r_m$ are pairwise distinct. Conditional on $\mathcal{D}_m$, $R_m$ is also a~random uniform subset of~$W_3$, of size $m$. Hence
	
	\begin{equation*}
	\mathbb{P}\big(R\text{ satisfies }\mathcal{P}\text{ }\big|\text{ }|R|=m \big) = 
	\mathbb{P}_m\big(R_m\text{ satisfies }\mathcal{P}\text{ }\big|\text{ }\mathcal{D}_m\big)
	\leq \frac{\mathbb{P}_m\big(R_m\text{ satisfies }\mathcal{P}\big)}{\mathbb{P}_m\big(\mathcal{D}_m\big)}.
	\end{equation*}

	It suffices to show that $\mathbb{P}_m\big(\mathcal{D}_m\big)\geq 1-o(1)$ for every $m\in I_\delta$, with 
	the bound $o(1)$  depending only on~$n$.
	
	We first note that $m^2\leq 64(1+\delta)^2p^2n^6\leq64(1+\delta)^2n^{8/3-2\eps}=o(n^3)$. As $|W_3|=8n^3(1+o(1))$, we have $m^2/|W_3| \leq o(1)$. Finally, by the~Bernoulli's inequality	
	\begin{equation*}
	\begin{split}
	\mathbb{P}_m\big(\mathcal{D}_m\big) &= \left(1-\frac{1}{|W_3|}\right)\left(1-\frac{2}{|W_3|}\right)\ldots
	\left(1-\frac{m-1}{|W_3|}\right)\\ 
	&\geq \left(1-\frac{m-1}{|W_3|}\right)^{m-1}\geq 1-\frac{(m-1)^2}{|W_3|}\geq 1-\frac{m^2}{|W_3|}
	\geq 1-o(1),
	\end{split}
	\end{equation*}
	
	proving our claim.
	
\end{proof}

\begin{proof}[Proof of Theorem~\ref{main_thm_non_lo} for $p<n^{-5/3-\eps}$]
	Let $\Gamma\in\Gamma(n,p)$ be a~random triangular group. 
	Consider the~following event in $\Gamma(n,p)$.
	\begin{equation*}
	\mathcal{A}=\big\{\Gamma\text{ is left-orderable, }\ker(\iota)\cap S=\emptyset\text{ and }|R|\in I_\delta. \big\}
	\end{equation*}
	
	By~Lemmas~\ref{non_triv_gen} and~\ref{lem_conc}, it suffices to show that $\mathbb{P}\left(\mathcal{A}\right)\rightarrow 0$ as $n\rightarrow \infty$.
	
	If $\mathcal{A}$ holds, then by Corollary~\ref{single_formula} the~formula $\Phi_R$ is satisfiable. We bound $\mathbb{P}\left(\mathcal{A}\right)$ as follows, using Lemma~\ref{switch_prob}.
	
	\begin{equation}\label{bound_on_A}
	\begin{split}
	\mathbb{P}\big(\mathcal{A}\big) &\leq \sum_{m\in I_\delta} \mathbb{P}\big(\Phi_R\text{ is satisfiable and }|R|=m \big)\\
	&= \sum_{m\in I_\delta} \mathbb{P}\big(\Phi_R\text{ is satisfiable }\big|\text{ }|R|=m \big) \mathbb{P}\big(|R|=m\big)\\
	&\leq (1+\theta)\sum_{m\in I_\delta}\mathbb{P}_m\big(\Phi_{R_m}\text{ is satisfiable}\big) \mathbb{P}\big(|R|=m\big)\\
	&\leq (1+\theta)\max_{m\in I_\delta}\ \mathbb{P}_m\big(\Phi_{R_m}\text{ is satisfiable}\big)
	\end{split}
	\end{equation}
	
	Fix $m\in I_\delta$. Let $\mathcal{E}$ be the~set of~all $2^n$ truth assignments $\eta : 
	\{x_1, x_2,\ldots, x_n \} \rightarrow \{T, F\}$. Using the~independence of $r_1,r_2,\ldots, r_m$,
	we obtain
	
	\begin{equation}\label{single_P_m}
	\begin{split}
	\mathbb{P}_m\big(\Phi_{R_m}\text{ is satisfiable}\big) &\leq \sum_{\eta\in\mathcal{E}}
	\mathbb{P}_m\big(\eta\text{ satisfies }\Phi_{R_m}\big)
	=\sum_{\eta\in\mathcal{E}}\mathbb{P}_m\left(\eta\text{ satisfies }\bigwedge_{i=1}^m\phi_{r_i}\right)\\
	&= \sum_{\eta\in\mathcal{E}}\mathbb{P}_m\big(\eta\text{ satisfies }\phi_{r_i}\text{ for }i=1,2,\ldots,m\big)\\
	&= \sum_{\eta\in\mathcal{E}}\ \prod_{i=1}^m\mathbb{P}_m\big(\eta\text{ satisfies }\phi_{r_i}\big)
	=\sum_{\eta\in\mathcal{E}}\ \big(\mathbb{P}_m\big(\eta\text{ satisfies }\phi_{r_1}\big)\big)^m.
	\end{split}
	\end{equation}
	
	Now fix $\eta$. For any 3 pairwise distinct $s_i, s_j, s_k \in S$, there exist exactly 2 triples
	${(\eps_i, \eps_j, \eps_k)\in\{-1,1\}^3}$, and hence exactly 2 words of form
	$w=s_i^{\eps_i}s_j^{\eps_j}s_k^{\eps_k}$, such that $\eta$ does not satisfy
	
	\begin{equation*}
	\phi_w = \big(x_i^{\eps_i} \vee x_j^{\eps_j} \vee x_k^{\eps_k}\big)\wedge \big(x_i^{-\eps_i} \vee x_j^{-\eps_j} \vee x_k^{-\eps_k}\big).
	\end{equation*}
	
	Hence in total there are at least $2n(n-1)(n-2)=2n^3(1+o(1))$ such words $w\in W_3$ that
	$\eta$~does not satisfy $\phi_w$, so
	\begin{equation*}
	\mathbb{P}_m\left(\eta\text{ satisfies } \phi_{r_1}\right) \leq 1-\frac{2n^3}{|W_3|}(1+o(1))
	= 1-\frac{2n^3}{8n^3}(1+o(1))=\frac{3}{4}+o(1),
	\end{equation*}
	where the bound $o(1)$ depends only on $n$. From (\ref{bound_on_A}) and (\ref{single_P_m}) this leads to

	\begin{equation*}
	\begin{split}
	\mathbb{P}\left(\mathcal{A}\right)&\leq(1+\theta)\max_{m\in I_\delta}\ \mathbb{P}_m\big(\Phi_{R_m}\text{ is satisfiable}\big)\\ 
	&\leq(1+\theta)\max_{m\in I_\delta}\ 2^n \left(\frac{3}{4}+o(1)\right)^m\\
	&\leq (1+\theta)\ 2^n \left(\frac{3}{4}+o(1)\right)^{(1-\delta)8pn^3}\\
	&=(1+\theta)\left(2\left(\frac{3}{4}+o(1)\right)^{(1-\delta)8pn^2}\right)^n.
	\end{split}
	\end{equation*}
	
	As $p>cn^{-2}$ with $c>c_0$, we have
	
	\begin{equation*}
	\begin{split}
	\mathbb{P}\left(\mathcal{A}\right)\leq(1+\theta)\ \left(2\left(\frac{3}{4}+o(1)\right)^{(1-\delta)8c}\right)^n.
	\end{split}
	\end{equation*}
	
	Finally, note that the~base of the~power tends to
	
	\begin{equation*}
 	2\left(\frac{3}{4}\right)^{8c} < 2\left(\frac{3}{4}\right)^{8c_0}=2\left(\frac{3}{4}\right)^{\log_{4/3}2}=1,
	\end{equation*}
	
	as $n\rightarrow \infty$, hence $\mathbb{P}\left(\mathcal{A}\right) \rightarrow 0$.
	
\end{proof}

\section{Non-left-orderability of quotients}\label{sec_qout}

In this section we prove Theorem~\ref{main_thm_quot}. The condition that $p$ is slightly larger than $(\log n)n^{-2}$ is dictated by the~following lemma,
which ensures that a~positive proportion of~elements of~$S$ represent non-trivial elements after passing to a~non-trivial quotient $Q$.

\begin{lem}\label{non_vanishing_lem}
	Suppose that $p\geq (1+\eps)(\log{n})n^{-2}$ for some $\eps>0$. Then there exists 
	a~constant $\alpha \in (0,1)$ such that a~random group $\Gamma \in \Gamma(n,p)$ a.a.s.\ satisfies the following property:
\begin{equation}\tag{$\star$}
\text{For every non-trivial epimorphism }q:\Gamma \twoheadrightarrow Q\text{, }\left| \ker(q\iota) \cap S\right| < \alpha n.
\end{equation}
\end{lem}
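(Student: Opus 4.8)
The plan is to argue by contradiction on the level of subsets of $S$: if property $(\star)$ fails, then there is some subset $A\subseteq S$ with $|A|\geq \alpha n$ all of whose elements die in some non-trivial quotient, and in particular $\iota$ restricted to the subgroup of $\Gamma$ generated by $A$ is far from injective. The key observation is that if every generator in $A$ becomes trivial in $Q$, then in fact the whole subgroup $\langle \iota(A)\rangle$ maps trivially to $Q$ unless $Q$ is itself generated by the images of $S\setminus A$; but since $q$ is non-trivial, this forces $S\setminus A$ to "carry" the quotient, and meanwhile the relators $R_A$ entirely supported on $A$ must already make $\langle \iota(A)\rangle$ a group in which those relators are trivially satisfiable in a way compatible with $q\iota|_A \equiv 1$. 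Concretely, I would first reduce to showing: a.a.s.\ there is no $A\subseteq S$ with $|A|\geq \alpha n$ such that the subpresentation $\langle A \mid R_A\rangle$ admits $A \subseteq \ker$ of the natural map to a non-trivial group — equivalently, such that collapsing all of $A$ to the identity in $\langle A\mid R_A\rangle$ is consistent, which it always is, so the real content must be extracted differently.

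The honest route, which I expect is what the paper takes, is to directly control $\ker(q\iota)\cap S$ via the relator structure. If $a\in\ker(q\iota)$ then $q\iota(a)=1_Q$, and for any relator $r=s_i^{\eps_i}s_j^{\eps_j}s_k^{\eps_k}\in R$ with, say, $s_i,s_j\in\ker(q\iota)\cap S$, we get $q\iota(s_k^{\eps_k})=1_Q$ as well, so $s_k\in\ker(q\iota)\cap S$. Thus the set $A=\ker(q\iota)\cap S$ is "$R$-closed" in the sense that no relator has exactly two letters among $\{s^{\pm1}:s\in A\}$ and one letter outside. For $q$ to be non-trivial we additionally need $A\neq S$. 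So it suffices to show: a.a.s.\ in $\Gamma(n,p)$, for every $A\subseteq S$ with $\alpha n \leq |A| < n$, there is some relator $r\in R$ using exactly two generators from $A$ and one from $S\setminus A$. This is a first-moment / union-bound computation over all such subsets $A$: fix $A$ with $|A|=a$, count the number $N(a)$ of words in $W_3$ whose underlying generators include exactly two elements of $A$ and one of $S\setminus A$ (this is $\Theta(a^2(n-a))$ up to the cyclic-reducedness and sign bookkeeping, times a constant), and the probability that $R$ misses all of them is $(1-p)^{N(a)}\leq e^{-pN(a)}$. Then sum $\binom{n}{a}e^{-pN(a)}$ over $a$ from $\alpha n$ to $n-1$ and show it tends to $0$.

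The main obstacle is the regime where $a$ is very close to $n$, say $a = n - t$ with $t$ small (even $t=1$): there $N(a)=\Theta(a^2 t) = \Theta(n^2 t)$, so $e^{-pN(a)} = e^{-\Theta(p n^2 t)} = e^{-\Theta((\log n)\, t)} = n^{-\Theta(t)}$ using $p\geq (1+\eps)(\log n)n^{-2}$, while $\binom{n}{a}=\binom{n}{t}\leq n^t$. So the summand is roughly $n^{t}\cdot n^{-(1+\eps)\cdot 8\cdot(\text{const})\cdot t}$ — I need to check the constant in $N(a)$ is large enough that the exponent is negative, which is exactly where the $(1+\eps)$ and the precise count of relevant words (the factor coming from the $\binom{3}{1}$ choices of which position is outside $A$, the $2^3$ sign choices, minus non-cyclically-reduced words) must be handled carefully; this is the place where one cannot afford to lose constants, and it dictates that $\alpha$ be chosen not too small (for intermediate $a$, $\binom{n}{a}\le 2^n$ and $e^{-pN(a)}\leq e^{-\Theta((\log n) n)}$ wins comfortably, so $\alpha$ only needs to keep $N(a)$ bounded below by a positive multiple of $n^3$, which it is for $a\geq \alpha n$). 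After the union bound goes through, property $(\star)$ holds a.a.s.\ with this $\alpha$, which completes the proof.
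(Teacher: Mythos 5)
Your second and third paragraphs are essentially the paper's proof: the paper takes exactly this union bound over subsets $A$ with $\alpha n\le|A|\le n-1$, using the forbidden words $P_A=\{abc: a,b\in A,\ c\in S\setminus A\}$ (so $|P_A|=|A|^2(n-|A|)\ge\alpha^2n^2(n-|A|)$, a subset of the words you count), and the critical near-$|A|=n$ regime you identify is handled precisely as you suggest, by choosing $\alpha$ close enough to $1$ that $\alpha^2(1+\eps)>1$, which makes the summand at most $n^{t(1-\alpha^2(1+\eps))}$ for $t=n-|A|$ and the sum tend to $0$. The constant-checking you flag as remaining to be done therefore closes without difficulty, and your first paragraph's detour is not needed.
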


\begin{proof}
Choose $\alpha\in(0,1)$ so that $\alpha^2(1+\eps)>1$. 
For every subset $A\subseteq S$, define in $\Gamma(n,p)$ the~event 
\begin{equation*}
V_A = \left\{\text{There exists a non-trivial epimorphism }q:\Gamma \twoheadrightarrow Q
\text{ such that }\ker(q\iota)\cap S= A. \right\}
\end{equation*}
and the set of words
\begin{equation*}
P_A = \left\{abc : a, b\in A, c\in S\setminus{A} \right\} \subseteq W_3.
\end{equation*} 

We claim that $V_A\subseteq \{ P_A\cap R = \emptyset\}$.
Indeed, suppose that $V_A$ holds and $w=abc \in P_A$. Then $q\iota(w) = q\iota(a)q\iota(b)q\iota(c)=q\iota(c)\neq 1_Q$ 
and hence $w\notin R$. 

Note that if $|A|\geq \alpha n$, then $|P_A| = |A| \cdot |A| \cdot (n-|A|)\geq \alpha^2n^2(n-|A|)$, so that

\begin{equation*}
\mathbb{P}\big(V_A\big) \leq \mathbb{P}\big(P_A\cap R = \emptyset\big)=\left(1-p\right)^{|P_A|}
\leq \left(1-p\right)^{\alpha^2n^2(n-|A|)}
\leq e^{-p\alpha^2n^2(n-|A|)},
\end{equation*}
where we use the~bound $e^x\geq 1+x$, true for $x\in\mathbb{R}$. Also note that $\mathbb{P}\big(V_S\big)=0$.

Now we can bound the~probability that the property ($\star$) does not hold as follows.

\begin{equation*}
\setlength{\jot}{2ex}
\begin{split}
\mathbb{P}\big(\Gamma \in \Gamma(n,p)\text{ does not satisfy property }(\star)\big)
&\leq \sum\limits_{\substack{A\subseteq S:\\ \alpha n \leq |A| \leq n-1}}\mathbb{P}\big(V_A\big)
\leq\sum\limits_{\substack{A\subseteq S:\\ \alpha n \leq |A| \leq n-1}} e^{-p\alpha^2n^2(n-|A|)}\\
&\leq\sum\limits_{\substack{A\subseteq S:\\ |A| \leq n-1}} e^{-p\alpha^2n^2(n-|A|)}
=\sum\limits_{k=0}^{n-1} {n\choose k}e^{-p\alpha^2n^2(n-k)}\\
&=\sum\limits_{k=0}^{n-1} {n\choose k}\Big(e^{-p\alpha^2n^2}\Big)^{n-k}=\Big(1+e^{-p\alpha^2n^2}\Big)^n-1\\
&\leq \Big(e^{e^{-p\alpha^2n^2}}\Big)^n-1=e^{e^{\log{n}-p\alpha^2n^2}}-1.
\end{split}
\end{equation*}

To finish the proof of the lemma, it suffices to show that the last expression tends to 0 as~$n\rightarrow \infty$. This is true as the topmost exponent satisfies

\begin{equation*}
\begin{split}
\limsup_{n\rightarrow\infty}\left(\log{n}-p\alpha^2n^2\right) &\leq 
\limsup_{n\rightarrow\infty}\left(\log{n}-(1+\eps)(\log{n})n^{-2}\alpha^2n^2\right)\\
&=\limsup_{n\rightarrow\infty}\left(1-\alpha^2(1+\eps)\right)\log{n}\\
&=-\infty.
\end{split}
\end{equation*}

\end{proof}

\begin{proof}[Proof of Theorem~\ref{main_thm_quot}]
	Let $\Gamma \in \Gamma(n,p)$ be a~random triangular group and let $\alpha\in(0,1)$ be a~number satisfying Lemma~\ref{non_vanishing_lem}. Define in $\Gamma(n,p)$ the event
	\begin{equation*}
	\mathcal{S} = \left\{\text{There exists a non-trivial epimorphism }q:\Gamma\twoheadrightarrow Q\text{ with a left-orderable }Q\text{ and }\left| \ker(q\iota) \cap S\right| < \alpha n\right  \}.
	\end{equation*}

It suffices to prove that $\mathbb{P}\left(\mathcal{S}\right)\rightarrow 0$ as $n\rightarrow\infty$.
Suppose $\mathcal{S}$ holds and let $A=S\setminus \ker(q\iota)$. We have $|A|\geq (1-\alpha)n$ and by~Proposition~\ref{formula_prop} the formula $\Phi_{R,A}$ is satisfiable. 

Let $\eta : \{x_i : s_i \in A\}\rightarrow \{T, F\}$ be a truth assignment satisfying $\Phi_{R,A}$. For every $s_i\in A$, choose $\eps_i \in \{-1, 1\}$ so that
$\eta(x_i^{\eps_i})=T$. Introduce the set 

\begin{equation*}
P_{A,\eta} = \{ s_i^{\eps_i}s_j^{\eps_j}s_k^{\eps_k} : s_i, s_j, s_k \in A \},
\end{equation*}
of cyclically reduced words, of cardinality $|P_{A,\eta}|\geq (1-\alpha)^3n^3$. By design, if $w\in P_{A, \eta}$, then $\eta$~does not satisfy $\phi_w$, so that $w\notin R_A$ and hence $P_{A,\eta}\cap R=P_{A,\eta}\cap R_A=\emptyset$. 

For fixed $A$ and $\eta$ we have

\begin{equation*}
\mathbb{P}\left(P_{A,\eta}\cap R=\emptyset\right) \leq (1-p)^{(1-\alpha)^3n^3}.
\end{equation*}

Denoting by $\mathcal{E}_A$ the~set of~all truth assignments 
$\eta : \{x_i : s_i \in A\}\rightarrow \{T, F\}$, we can bound

\begin{equation*}
\begin{split}
\mathbb{P}\left(S\right) &\leq \sum\limits_{\substack{A\subseteq S:\\ (1-\alpha) n \leq |A|}}
\sum\limits_{\eta\in\mathcal{E}_A} \mathbb{P}\left(P_{A,\eta}\cap R=\emptyset\right) \\
&\leq 2^n 2^n (1-p)^{(1-\alpha)^3n^3} \\
&\leq 4^n e^{-p(1-\alpha)^3n^3}\\
&=e^{n\log{4}-p(1-\alpha)^3n^3}.
\end{split}
\end{equation*}

As $p\geq (\log n)n^{-2}$, we have 

\begin{equation*}
e^{n\log{4}-p(1-\alpha)^3n^3} \leq e^{n\log{4} -(1-\alpha)^3n\log{n}}=e^{n(\log{4} -(1-\alpha)^3\log{n})}\rightarrow 0
\end{equation*}

as $n\rightarrow\infty$ and hence $\mathbb{P}\left(\mathcal{S}\right)\rightarrow 0$ as $n\rightarrow\infty$.

\end{proof}

\begin{bibdiv}
\begin{biblist}

\bib{als14}{article}{
		title={Collapse of random triangular groups: a closer look},
	author={Antoniuk, Sylwia},
	author={\L{}uczak, Tomasz},
	author={\'{S}wi\k{a}tkowski, Jacek},
	journal={Bull. Lond. Math. Soc.},
	volume={46},
	number={4},
	date={2014},
	pages={761--764}
}

\bib{als15}{article}{
	title={Random triangular groups at density 1/3},
	author={Antoniuk, Sylwia},
	author={\L{}uczak, Tomasz},
	author={\'{S}wi\k{a}tkowski, Jacek},
	journal={Compositio Mathematica},
	volume={151},
	number={1},
	date={2015},
	pages={167--178}
}

\bib{cop12}{article}{
	title = {Catching the k-NAESAT Threshold},
	author = {Coja-Oglan, Amin},
	author = {Panagiotou, Konstantinos},
	booktitle = {Proceedings of the Forty-Fourth Annual ACM Symposium on Theory of Computing},
	date = {2012},
	pages={899--908}
}

\bib{dnr14}{article}{
	author = {Deroin, Bertrand},
	author = {Navas, Andr\'es},
	author = {Rivas, Crist\'obal},
	year = {2014},
	title = {Groups, Orders, and Dynamics},
	eprint = {arXiv:1408.5805}
}

\bib{nav02}{article}{
	author = {Navas, Andr\'es},
	title = {Actions de groupes de Kazhdan sur le cercle},
	journal = {Annales scientifiques de l'\'Ecole Normale Sup\'erieure},
	volume = {4e s{\'e}rie, 35},
	number = {5},
	year = {2002},
	pages = {749--758}
}

\bib{orl17}{article}{
	title = {Random groups are not left-orderable},
	author = {Orlef, Damian},
	journal = {Colloquium Mathematicum},
	volume = {150},
	year = {2017},
	month = {09},
	pages = {175--185}
}

\bib{zuk03}{article}{
	title={Property (T) and Kazhdan constants for~discrete 
		groups},
	author={\.{Z}uk, Andrzej},
	journal={Geom. Funct. Anal.},
	volume={13},
	date={2003},
	pages={643--670}
}

\end{biblist}
\end{bibdiv}

\end{document}